\author{Tuomas Orponen}
\title{A discretised projection theorem in the plane}
\address{School of Mathematics, University of Edinburgh}
\subjclass[2010]{28A80 (Primary), 52C30 (Secondary)}
\thanks{T.O. gratefully acknowledges the financial support of the Finnish foundation Jenny and Antti Wihurin Rahasto.}
\email{tuomas.orponen@helsinki.fi}
\newcommand{\R}{\mathbb{R}}
\newcommand{\N}{\mathbb{N}}
\newcommand{\calT}{\mathcal{T}}
\newcommand{\calL}{\mathcal{L}}
\numberwithin{equation}{section}
\theoremstyle{plain}
\newtheorem{thm}[equation]{Theorem}
\newtheorem{ex}[equation]{Example}
\newtheorem{proposition}[equation]{Proposition}
\newtheorem{claim}[equation]{Claim}
\theoremstyle{definition}
\newtheorem{definition}[equation]{Definition}
\theoremstyle{remark}
\newtheorem{remark}[equation]{Remark}
\begin{document}

\begin{abstract} The main result of this paper is that for any $1/2 \leq s < 2 - \sqrt{2} \approx 0.5858$, there is a number $\sigma = \sigma(s) < s$ with the following property. Let $\delta > 0$, assume that $A \subset [0,1]$ is a $(\delta,1/2)$-set, and that $E \subset [0,1]$ contains $\gtrsim \delta^{-\sigma}$ roughly $\delta^{s}$-separated points. Then there exists a number $t \in E$ such that $A + tA$ contains $\gtrsim \delta^{-s}$ $\delta$-separated points. 

For $\sigma = s$, this is essentially a consequence of Kaufman's well-known bound for exceptional sets of projections. Our proof consists of a structural observation concerning sets, for which Kaufman's bound is near-optimal, combined with (an adaptation of) Solymosi's argument for his "$4/3$" sum-product theorem. 
\end{abstract}

\maketitle

\section{Introduction} The starting point for this paper is R. Kaufman's exceptional set bound \cite{Ka} for projections from 1968. For $e \in S^{1}$, let $\pi_{e} \colon \R^{2} \to \R$ stand for the orthogonal projection $\pi_{e}(x) := e \cdot x$, and denote Hausdorff dimension by $\dim$. If $K \subset \R^{2}$ is a Borel set with $\dim K = 1$, Kaufman's bound reads
\begin{equation}\label{kaufman} \dim \{e \in S^{1} : \dim \pi_{e}(K) < s\} \leq s, \qquad 0 \leq s \leq 1. \end{equation}
The bound is sharp for $s = 1$, as shown by Kaufman and Mattila \cite{KM} in 1975, but for $s < 1$, this seems unlikely. It has been conjectured (in for instance \cite[(1.8)]{Ob}) that the sharp estimate might be
\begin{equation}\label{conjecture} \dim \{e : \dim \pi_{e}(K) < s\} \leq 2s - 1, \qquad 0 \leq s \leq 1, \end{equation}
and indeed an analogous bound is easy to verify in the discrete situation, see \eqref{sharpKaufman} below. In the continuous case, however, such an improvement appears well beyond the reach of current technology. For general sets $K$, the most notable refinement to \eqref{kaufman} is due to Bourgain \cite{Bo}, who managed to prove that
\begin{equation}\label{bourgain} \dim \{e : \dim \pi_{e}(K) < s\} \searrow 0, \qquad \text{as } s \searrow 1/2. \end{equation}
The rate of decay in \eqref{bourgain} is not explicitly stated in Bourgain's paper, and while the estimates are effective throughout, it is likely that his method of proof only gives useful information for values of $s > 1/2$ very close to $1/2$. 

At the core of Bourgain's proof, there is an estimate of the following kind. Assume that $A \subset [1,2]$ is a finite $\delta$-separated set with $|A| \sim \delta^{-1/2}$, satisfying a weak non-concentration hypothesis of the form
\begin{displaymath}  |A \cap B(x,r)| \lesssim r^{\kappa_{1}}|A|, \qquad x \in \R, \: r \geq \delta, \end{displaymath}
where $\kappa_{1} > 0$ is any positive number.\footnote{Here, $A \lesssim B$ means that $A \leq CB$ for some absolute constant $C \geq 1$, and $A \sim B$ stands for the two-sided inequality $A \lesssim B \lesssim A$.} Then, if $E \subset [0,1]$ is another finite set with $|E| > \delta^{-\kappa_{2}}$, $\kappa_{2} > 0$, satisfying a similar non-concentration hypothesis as $A$ with a constant, say, $\kappa_{3} > 0$, then one can find a point $t \in E$ such that $A + tA$ contains at least $\gtrsim \delta^{-1/2-\epsilon}$ $\delta$-separated points for some $\epsilon = \epsilon(\kappa_{1},\kappa_{2},\kappa_{3}) > 0$.

The main result of the current paper has a similar appearance, although the non-concentration assumptions are far more restrictive, and the proof technique is different. On the positive side, the number $\epsilon > 0$ is more concrete and allows us to get some information for all $s \in [1/2,2 - \sqrt{2})$, which is not a consequence of Kaufman's bound \eqref{kaufman}. We make the following definition (which is essentially copied from Katz and Tao's paper \cite{KT}):

\begin{definition} A finite set $A \subset [0,1]$ is called a $(\delta,s)$-set, if $A$ is $\delta$-separated, and
\begin{displaymath} |A \cap B(x,r)| \lesssim \left(\frac{r}{\delta}\right)^{s}, \qquad x \in \R, \: r \geq \delta.\end{displaymath}
\end{definition}
The point of the definition is that a $(\delta,s)$-set is a $\delta$-discretised model for a "continuous" set $A \subset [0,1]$ with $\mathcal{H}^{s}(A) > 0$. To demonstrate this, we quote \cite[Proposition A.1.]{FO}, although we will not need the result in this paper:
\begin{proposition}\label{FO} Let $\delta > 0$, and let $A \subset [0,1]$ be a set with $\mathcal{H}^{s}(A) =: \kappa > 0$. Then, there exists a $(\delta,s)$-set $P \subset A$ of cardinality $|P| \gtrsim \kappa \cdot \delta^{-s}$.
\end{proposition}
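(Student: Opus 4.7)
My plan is to apply Frostman's lemma to $A$ and then discretise the resulting measure at scale $\delta$. Since $\mathcal{H}^{s}(A) = \kappa > 0$, a suitable form of Frostman's lemma supplies a nontrivial Borel measure $\mu$ on $A$ with $\mu(A) \gtrsim \kappa$ and $\mu(B(x,r)) \leq r^{s}$ for all $x \in \R$ and $r > 0$. I would partition $[0,1]$ into half-open intervals $\{I_{j}\}$ of length $\delta$, and for each $I_{j}$ with $\mu(I_{j}) > 0$ pick one point $p_{j} \in I_{j} \cap A$. Since $\mu(I_{j}) \leq \delta^{s}$, the raw count of such points is at least $\mu(A)/\delta^{s} \gtrsim \kappa \delta^{-s}$, and after thinning to a maximal $\delta$-separated subcollection (an $O(1)$ loss), the resulting set is $\delta$-separated.

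The technical difficulty is that this raw candidate set need not satisfy the $(\delta,s)$ non-concentration bound: many intervals with $\mu(I_{j}) \ll \delta^{s}$ can cluster inside a single small ball $B(x,r)$ without triggering the Frostman upper bound on $\mu(B(x,r))$. I would handle this by dyadic pigeonholing on $\mu$-mass: sort the intervals into bins $\mathcal{I}_{k} := \{I_{j} : 2^{-k-1}\delta^{s} < \mu(I_{j}) \leq 2^{-k}\delta^{s}\}$, discard the bins of intervals with $\mu(I_{j}) \lesssim \kappa \delta$ (which collectively carry $\mu$-mass at most $\kappa/2$ since there are $\leq 1/\delta$ intervals in total), and pigeonhole across the remaining $O(\log(1/\delta))$ bins to find a surviving bin $\mathcal{I}_{k^{*}}$ with total $\mu$-mass $\gtrsim \kappa/\log(1/\delta)$. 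Setting $P := \{p_{j} : I_{j} \in \mathcal{I}_{k^{*}}\}$ and $M := 2^{-k^{*}}\delta^{s}$, this yields $|P| \gtrsim \kappa/(M\log(1/\delta))$, while the Frostman bound produces
$$|P \cap B(x,r)| \cdot M \lesssim \mu(B(x,r+\delta)) \leq (2r)^{s} \qquad (r \geq \delta),$$
i.e.\ $|P \cap B(x,r)| \lesssim r^{s}/M$, because the disjoint intervals $I_{j}$ supporting points of $P \cap B(x,r)$ all lie inside $B(x, r+\delta)$ and each carries $\mu$-mass at least $M/2$.

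The main obstacle is reconciling the two target bounds: cardinality $|P| \gtrsim \kappa \delta^{-s}$ favours $M$ small, whereas $(\delta,s)$ non-concentration $|P \cap B(x,r)| \lesssim (r/\delta)^{s}$ with an absolute constant requires $M \gtrsim \delta^{s}$. I would resolve this by allowing the implicit constants in $\gtrsim$ and in the definition of a $(\delta,s)$-set to absorb the single $\log(1/\delta)$ factor---a convention standard in discretised projection results---or alternatively by replacing the deterministic pigeonhole by an independent random selection that keeps each $p_{j}$ with probability $\mu(I_{j})/\delta^{s}$. The latter gives $\mathbb{E}|P| \gtrsim \kappa \delta^{-s}$ and $\mathbb{E}|P \cap B(x,r)| \lesssim (r/\delta)^{s}$ with absolute constants, and a single realisation simultaneously meeting both demands can be extracted by Chernoff-type concentration applied over a $\delta$-net of balls.
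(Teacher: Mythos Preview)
The paper does not supply a proof of this proposition: it is quoted from \cite[Proposition~A.1]{FO} and explicitly flagged as not needed here. So there is no in-paper argument to compare against. Your overall strategy---apply Frostman's lemma to obtain $\mu$ with $\mu(A)\gtrsim\kappa$ and $\mu(B(x,r))\le r^{s}$, then discretise at scale $\delta$---is precisely the standard route, and is essentially what the cited reference does.

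One genuine gap, however: your resolution (a) via dyadic pigeonholing on the masses $\mu(I_{j})$ cannot be repaired by absorbing a single $\log(1/\delta)$. After selecting a bin at level $M=2^{-k^{*}}\delta^{s}$ you obtain $|P\cap B(x,r)|\lesssim r^{s}/M$, and matching this to $(r/\delta)^{s}$ forces $M\gtrsim\delta^{s}$. But the only lower bound your construction gives is $M\gtrsim\kappa\delta$ (the discard threshold), which for $s<1$ falls short of $\delta^{s}$ by the \emph{polynomial} factor $\delta^{1-s}$, not a logarithm; nothing prevents the surviving $\mu$-mass from sitting entirely in bins with $M\ll\delta^{s}$. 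So the set produced by (a) need not be anything close to a $(\delta,s)$-set, even up to logarithms.

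Your randomised alternative (b), retaining each $p_{j}$ independently with probability $\mu(I_{j})/\delta^{s}$, is the sound route: both the cardinality and the non-concentration hold in expectation with absolute constants, and extracting a good realisation via Chernoff over dyadic intervals is routine (at worst picking up a harmless $\log$-power in the $(\delta,s)$ constant at the smallest scales, removable with slightly more care if one insists on absolute constants).
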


\begin{thm}\label{main} For $1/2 \leq s < 2 - \sqrt{2}$, there exists a number $\sigma = \sigma(s) < s$ with the following property. Assume that $\delta > 0$ is small enough, and $A \subset [0,1]$ is a $(\delta,1/2)$-set of cardinality $\sim \delta^{-1/2}$. Then, if $E \subset [0,1]$ contains $\gtrsim \delta^{-\sigma}$ points, which are $\gtrsim \delta^{-s}$-separated, there exists $t \in E$ such that $A + tA$ contains $\gtrsim \delta^{-s}$ $\delta$-separated points.
\end{thm}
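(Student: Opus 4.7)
I argue by contraposition: suppose that $|A + tA|_{\delta} \leq C\delta^{-s}$ for every $t \in E$, and aim to deduce $|E| \lesssim \delta^{-\sigma}$ for an explicit $\sigma = \sigma(s) < s$, contradicting the hypothesis. As a preparatory step, for each $t \in E$ I extract \emph{rich tubes}: since the fibres of $\pi_{t}(x,y) = x + ty$ on $A \times A$ are $\delta$-tubes of direction $(-t,1)/\sqrt{1+t^{2}}$, and $|A \times A| \sim \delta^{-1}$ is distributed over $\leq C\delta^{-s}$ non-empty fibres, a dyadic pigeonhole produces a family $\calT_{t}$ of $\sim \delta^{-s}$ such tubes, each meeting $A \times A$ in $\sim \delta^{s-1}$ points (modulo $\log(1/\delta)$-factors which I suppress). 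After enumerating $E = \{t_{1} < \cdots < t_{N}\}$ in increasing order, the $\delta^{s}$-separation of $E$ translates into consecutive angular gaps $\gtrsim \delta^{s}$ between the tube directions of $\calT_{t_{i}}$ and $\calT_{t_{i+1}}$.

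The core of the argument is an adaptation of Solymosi's ``$4/3$'' sum-product proof to the $\delta$-discretised setting. Recall Solymosi's key step: partition $A \times A$ into rays $L_{\tau}$ through the origin indexed by slopes $\tau \in A/A$, order the rays by slope, and observe that for consecutive $\tau < \tau'$ the sumset $(L_{\tau} \cap (A \times A)) + (L_{\tau'} \cap (A \times A))$ is injective by transversality and lies in the angular sector between $\tau$ and $\tau'$; summing over consecutive slopes is then controlled by $|(A+A) \times (A+A)| \leq |A+A|^{2}$. I mimic this with rays replaced by the $\delta$-tubes from $\calT_{t}$, $t \in E$, and quantify the two loss factors that the classical argument does not see: a near-injectivity factor for the addition map $(p,q) \mapsto p + q$ on $P_{T} \times P_{T'}$, since the angular gap is only $\delta^{s}$; and an angular-sector overlap factor, since tubes are no longer anchored at the origin. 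The \emph{structural observation}, which is the second input of the proof, handles the first factor: the $(\delta, 1/2)$-set hypothesis on $A$ forces the points of $P_{T} := T \cap (A \times A)$ to be non-concentrated along $T$ at all intermediate scales, with $|P_{T} \cap B(x,r)| \lesssim (r/\delta)^{1/2}$ for $r \in [\delta, 1]$, and this in turn tames the multiplicity of $\delta$-collisions $p + q \approx p' + q'$ that would otherwise spoil the Solymosi injectivity.

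Aggregating the per-pair sumset lower bounds over the consecutive pairs $(t_{i}, t_{i+1})$ and comparing with the trivial ceiling $|(A+A) \times (A+A)|_{\delta} \leq 4\delta^{-2}$ yields an inequality of schematic form $N \cdot \delta^{f(s)} \lesssim \delta^{-2}$ with an explicit exponent $f(s)$; solving for $N$ gives $|E| \lesssim \delta^{-\sigma(s)}$, and the condition $\sigma(s) < s$ reduces to the polynomial inequality $s^{2} - 4s + 2 > 0$, whose smallest positive root is precisely $s = 2 - \sqrt{2}$. The principal technical difficulty is the near-injectivity analysis: without the exact rigidity of rays through the origin one has to bound the number of $\delta$-collisions between sums, and getting this multiplicity small enough to beat the trivial exponent $\sigma = s$ is exactly where the structural non-concentration of $P_{T}$ is indispensable. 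Once that multiplicity is controlled, the remainder of the argument is a careful bookkeeping over the several dyadic pigeonholings and over the log-losses hidden in ``$\sim$''.
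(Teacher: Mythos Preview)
Your proposal correctly identifies the two ingredients --- a Kaufman-type tube extraction and a Solymosi-style sumset argument --- but it skips precisely the step that makes them fit together: the \emph{fan} construction. In Solymosi's argument the rays are concurrent (through the origin), and this is what forces the sumsets $P_{T_j} + P_{T_{j+1}}$ for consecutive $j$ to land in pairwise \emph{disjoint} angular sectors; only then does ``aggregating the per-pair sumset lower bounds'' make sense. Your tubes $T \in \calT_{t_i}$ are parallel families in $|E|$ different directions, with no common vertex, so the sums coming from different consecutive pairs $(t_i,t_{i+1})$ have no reason to be disjoint at scale $\delta$, and your comparison with $|(A+A)\times(A+A)|_{\delta}$ collapses. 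You flag this yourself as the ``angular-sector overlap factor,'' but then dismiss it and declare near-injectivity to be ``the principal technical difficulty.'' It is the other way around: the overlap is the heart of the matter, and the paper spends its entire first half (Claim~\ref{fanClaim}) on it.

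The paper's resolution is to use the \emph{near-sharpness of Kaufman's bound} --- i.e.\ the counter-assumption that $|E|\gtrsim\delta^{-\sigma}$ with $\sigma$ close to $s$ --- to locate a single point $b_0\in A\times A$ and a subfamily of tubes $T_e(b_0)$, $e\in E'$, all passing through $b_0$, whose union still captures $\gtrsim\delta^{\tau-1}$ points of $A\times A$ with $\tau\to 0$ as $\sigma\to s$. This double-counting/pigeonhole step crucially exploits the $(\delta,s)$-separation of $E$ via \eqref{form7}. Only after the fan is found does the Solymosi argument run: now the tubes are concurrent at $b_0$, consecutive tubes bound disjoint conical regions $W_j$, and the vector sums $(x+y)-b_0$ fall into the correct $W_j$. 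Two further points: (i) the non-concentration on tubes that the paper uses is not your $(r/\delta)^{1/2}$ from the $(\delta,1/2)$-hypothesis, but the stronger $(r/\delta)^{1-s}$ coming from the ``$\sigma$-good'' energy bound on the fan tubes --- this sharper exponent is what drives the numerology to $2-\sqrt{2}$; (ii) the final comparison is not with the trivial ceiling $\delta^{-2}$ but with $|A+A|\lesssim\delta^{-s}$, which is available because the paper arranges $e_0=(1,1)/\sqrt{2}\in E$. Without the fan your outline does not yield a proof.
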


\begin{remark} For comparison, let us remark that any standard proof of Kaufman's bound \eqref{kaufman} combined with Proposition \ref{FO} could be used to deduce Theorem \ref{main} with $\sigma = s$ (at least, if one allows the constants in the $\lesssim$-notation to be of the order $\log(1/\delta)$). So, the whole point of Theorem \ref{main} is to find $\sigma$ slightly smaller than $s$. \end{remark}


The proof strategy in Theorem \ref{main} is roughly the following. If no such $\sigma(s) < s$ existed, then for any $\sigma < s$ we could find a set $A$ as in the hypotheses, such that $A + tA$ would contain $\ll \delta^{-s}$ points for all $t \in E$. In particular, we assume that $1 \in E$. As $\sigma \nearrow s$, such a set $A$ -- or rather $A \times A$ -- gets almost as bad as a set can be in view of Kaufman's bound \eqref{kaufman}. So, whatever estimates are used in the proof of that bound must be quite sharp for this particular $A$. It turns out that one can relatively easily extract some structural information about $A \times A$ based on this knowledge. More precisely, there exist many "fan" structures, where a significant part of $A \times A$ is concentrated on relatively few $\delta$-tubes emanating from a fixed point in $A \times A$. 

To exploit the existence of fans, we borrow and adapt an argument of J. Solymosi \cite{So} in discrete sum-product theory. His theorem says that $\min \{|A + A|, |A \cdot A|\} \gtrsim |A|^{4/3}/\log |A|$ for any finite set $A \subset \R$, so the result in itself is not applicable in our situation. However, the same sort of "fan" structure arises from the assumption that $|A \cdot A|$ is small, so we may exploit Solymosi's idea to conclude that $A + A$ has size at least $\delta^{-1/2 - c}$ for some explicit constant $c > 0$. Assuming that $s < 1/2 + c$ leads to a contradiction, because $1 \in E$. Curiously, the passage from points and lines in Solymosi's proof to $\delta$-discs and tubes in our setting does not cause serious trouble here, thanks to the strong non-concentration assumptions. 

The proof of Theorem \ref{main} can be found in Section \ref{mainProof}. Before that, in the next section, we briefly discuss similar -- and much easier -- problems in the discrete situation. 

\section{Remarks in the discrete situation}

The discrete analogue of Kaufman's bound \eqref{kaufman} would be the following. Let $P \subset \R^{2}$ be any set of $n$ points. Then
\begin{equation}\label{discreteKaufman} |\{e : |\pi_{e}(P)| \leq n^{s}\}| \lesssim n^{s}, \qquad 0 \leq s < 1. \end{equation} 
Why this is the natural analogue is not a matter of pure numerology, but the proof is essentially the same $L^{2}$ type argument that gives \eqref{kaufman}. It can be sketched in a few words as follows. 
\begin{proof}[Proof of \eqref{discreteKaufman}] Fix $0 \leq s < 1$. Then, for any \emph{bad} $e \in S^{1}$ such that $|\pi_{e}(P)| \leq n^{s}$ there must correspond many pairs $(p,q) \in P \times P$, $p \neq q$, such that $\pi_{e}(p) = \pi_{e}(q)$. In fact, one can easily check that the number of such pairs is at least of the order $n^{2 - s}$ (this numerology would arise, if the points in $P$ were equally divided on $n^{s}$ lines of the form $\pi_{e}^{-1}\{t\}$ with $t \in \pi_{e}(P)$, and Cauchy-Schwarz can be used to show that this is the worst case, indeed). For various bad $e \in S^{1}$, the $\gtrsim n^{2 - s}$ pairs $(p,q)$ are distinct, because the directions $(p - q)/|p - q| \perp e$ are. But there are only $\sim n^{2}$ pairs in $P \times P$, so there can be at most $\lesssim n^{s}$ bad directions $e$. \end{proof}

It is known that \eqref{discreteKaufman} is not sharp for any $s < 1$, and in fact the sharp bound is known, too, see \cite[Proposition 1.8]{Or}:
\begin{equation}\label{sharpKaufman} |\{e : |\pi_{e}(P)| \leq n^{s}\}| \lesssim n^{2s - 1}, \qquad 1/2 \leq s < 1. \end{equation}
The proof of \eqref{sharpKaufman} is very short, but unfortunately relies heavily on the Szemer\'edi-Trotter bound and so does not easily lend itself easily for continuous problems. Now, the starting point of this paper was the following: even if we already know that \eqref{discreteKaufman} is not sharp, we can still consider the hypothetical case that it were, and see what the proof above tells us about the (non-existent) extremisers. Perhaps this information could also be exploited in the continuous case?

With this question in mind, assume that $\sigma < s$ is very close to $s$, and $P \subset \R^{2}$ is an $n$-point set such that $|\{e : |\pi_{e}(P)| \leq n^{s}\}| \sim n^{\sigma}$. Then, a quick re-examination of the proof of \eqref{discreteKaufman} reveals that there exists a set $G \subset P \times P$ of cardinality $|G| \gtrsim n^{2 - (s - \sigma)}$ such that the pairs in $G$ only span $\sim n^{\sigma}$ directions. In symbols,
\begin{displaymath} S(G) := \left\{\frac{p - q}{|p - q|} : (p,q) \in G\right\} \end{displaymath}
satisfies $|S(G)| \sim n^{\sigma}$. Does this tell us something about the set $P$?

In the discrete case, the answer is strongly positive. The only previously available result in the right direction appears to be due to P. Ungar from 1982, saying that if $|S(P \times P)| < n - 1$, then the whole set $P$ is contained on a single line. For us, this would be a great conclusion, because a set contained on a single line has $\lesssim 1$ bad directions in terms of projections. However, we only have information available about a major subset $G \subset P \times P$, so there is no hope to conclude something quite as strong. In our situation, the optimal result would be to show that "small $S(G)$ for a large $G$ forces many points of $P$ on a single line." Indeed, such a result follows from the Szemer\'edi-Trotter bound:
\begin{thm}\label{mainDiscrete} Let $P \subset \R^{2}$ be a finite set with $|P| = n$. Let $s \geq 1/2$, and assume that $G \subset P \times P$ is a set of pairs of cardinality $|G| \geq n^{1 + s}$ such that
\begin{displaymath} |S(G)| \leq \frac{cn^{2s - 1}}{\log^{12} n} \end{displaymath}
for a suitable absolute constant $c > 0$. Then $|\ell \cap P| \gtrsim n^{s}/\log^{4} n$ for some line $\ell \subset \R^{2}$. This result is sharp in the sense that assuming $|S(G)| \leq n^{t}$ for any $t < 2s - 1$ does not improve the conclusion, and on the other hand the slightly weaker assumption $|S(G)| \lesssim n^{2s - 1}$ could, at best, yield the conclusion $|\ell \cap P| \gtrsim n^{1/2}$ for some line $\ell$. 
\end{thm}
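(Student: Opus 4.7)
The plan is to dyadically decompose the lines in directions of $S(G)$ by how many points of $P$ they contain, and to estimate each dyadic family in two complementary ways: a pigeonhole bound that exploits the smallness of $|S(G)|$, and the Szemer\'edi-Trotter incidence bound that is blind to the directions. Playing the two against each other in an $\ell^{2}$ count of pairs supported on lines will force some line to be richly populated.

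For $k \geq 1$, let $\calL_{k}$ be the family of lines $\ell \subset \R^{2}$ that point in some direction of $S(G)$ and satisfy $|\ell \cap P| \geq 2^{k}$, and set $T := \max_{\ell} |\ell \cap P|$ (the quantity to bound from below). Every pair $(p,q) \in G$ lies on a unique line in a direction of $S(G)$, and such a line supports at most $|\ell \cap P|^{2}$ such pairs. A standard dyadic rearrangement then gives
\[
n^{1+s} \; \leq \; |G| \; \lesssim \; \sum_{1 \leq k \leq \log T} 4^{k} \, |\calL_{k}|.
\]
Now $|\calL_{k}|$ enjoys two upper bounds: pigeonhole, $|\calL_{k}| \leq |S(G)| \cdot n/2^{k}$ (since lines in each fixed direction partition $P$), and Szemer\'edi-Trotter, $|\calL_{k}| \lesssim n^{2}/2^{3k} + n/2^{k}$. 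The pigeonhole bound is the smaller for $2^{k} \leq \sqrt{n/|S(G)|}$; the $n^{2}/2^{3k}$ term of Szemer\'edi-Trotter then takes over in the window $\sqrt{n/|S(G)|} < 2^{k} \leq \sqrt{n}$; and its $n/2^{k}$ term dominates for $2^{k} > \sqrt{n}$. Each of the three resulting sub-sums of $4^{k}|\calL_{k}|$ is geometric and controlled by its endpoint, giving
\[
|G| \; \lesssim \; n^{3/2}\sqrt{|S(G)|} + n \cdot T.
\]

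Substituting the hypothesis $|S(G)| \leq cn^{2s-1}/\log^{12}n$ bounds the first term by $\sqrt{c}\,n^{1+s}/\log^{6}n$, which is far smaller than $|G| \geq n^{1+s}$ once $c$ is a sufficiently small absolute constant. Hence $nT \gtrsim n^{1+s}$, so $T \gtrsim n^{s}$, which is already stronger than the target $n^{s}/\log^{4}n$ and leaves comfortable slack to absorb implicit constants and the handful of $\log$ factors lost during the dyadic decomposition. The two sharpness claims stated in the theorem I would address afterwards with concrete configurations: a union of $n^{1-s}$ parallel lines with $n^{s}$ collinear points each witnesses the first (even $|S(G)| = 1$ permits $T \sim n^{s}$), and a scaled $\sqrt{n} \times \sqrt{n}$ integer grid witnesses the second. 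The only real difficulty in the proof is the clean bookkeeping of the three dyadic regimes; no clever geometry is required beyond the standard $\ell^{2}$ dyadic exploitation of Szemer\'edi-Trotter.
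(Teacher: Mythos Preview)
Your argument is correct and in fact slightly cleaner than the paper's. Both proofs ultimately rest on the Szemer\'edi--Trotter rich-line bound, but the routes differ. The paper first pigeonholes directions $e \in S(G)$ by the number of $G$-pairs they support, then for each surviving direction pigeonholes lines by their point count, then uniformizes to a common dyadic richness $2^{k}$ across many directions, and finally applies Szemer\'edi--Trotter once to the resulting line family, finishing with a three-case analysis. You instead decompose \emph{all} lines in $S(G)$-directions globally by dyadic richness, bound each dyadic class $\calL_{k}$ by the minimum of the direction-based pigeonhole $|S(G)|\cdot n/2^{k}$ and the Szemer\'edi--Trotter rich-line estimate $n^{2}/2^{3k} + n/2^{k}$, and sum the resulting geometric series to obtain the closed-form inequality $|G| \lesssim n^{3/2}\sqrt{|S(G)|} + nT$. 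This is a more standard additive-combinatorics packaging and avoids the repeated pigeonholing; as a bonus it gives $T \gtrsim n^{s}$ with no logarithmic loss, whereas the paper's argument incurs a $\log^{4} n$ factor. Your sharpness examples (a union of $n^{1-s}$ parallel $n^{s}$-rich lines, and the $\sqrt{n}\times\sqrt{n}$ grid) are exactly those the paper uses.
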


\begin{remark} We postpone the proof to the appendix, because the technique does not help us tackle continuous problems. However, we wish to point out that as $s \nearrow 1$, Theorem \ref{mainDiscrete} essentially recovers Ungar's result (at least as far as the exponents are concerned), and the bound \eqref{sharpKaufman} is also a corollary (mod logarithmic factors). To see the latter claim, assume that \eqref{sharpKaufman} fails for some $1/2 < s < 1$, and find many pairs spanning few directions as in the proof of the bound \eqref{discreteKaufman}. Then, Theorem \ref{mainDiscrete} produces a line containing so many points of $P$ that a contradiction is reached (such a set $P$ cannot have more than two "bad" directions $e$). \end{remark}

In the continuous situation, unfortunately, we could not find a way to implement the strategy outlined above; in other words, the proof of Theorem \ref{main} does start by using the hypothetical sharpness of Kaufman's bound to find many pairs spanning only few directions, just as above, but without Szemer\'edi-Trotter at our disposal, we do not know how to use this to extract sufficiently many $\delta$-discs in the vicinity of a single $\delta$-tube. It remains open, whether this strategy has any potential in the continuous case. 

Instead, the "many pairs spanning few directions" information will be simply used to find a \emph{fan}. This refers to a (rather small) collection of tubes, all containing a common point, the union of which contains a significant part of the points in $P$ -- or $A \times A$, as in Theorem \ref{main}. In the discrete case, finding a fan is straightforward: if there are $\gtrsim n^{2 - (s - \sigma)}$ pairs in $P \times P$ spanning only $\sim n^{\sigma}$ directions, then the generic point in $P$ can be connected to $\gtrsim n^{1 - (s - \sigma)}$ other points of $P$ using $\lesssim n^{\sigma}$ lines. The union of these lines forms a fan. 

In the continuous situation, Theorem \ref{main}, the idea is essentially the same, but the existence of a fan structure requires a stronger non-concentration assumption on the bad set of directions $E$ than simple $\delta$-separation. 
\begin{figure}[h!]
\begin{center}
\includegraphics[scale = 0.7]{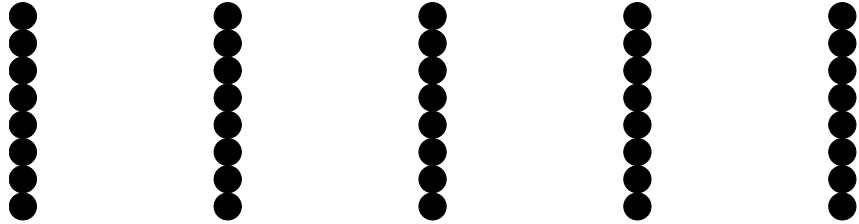}
\caption{An example to keep in mind: $\sim \delta^{-1/2}$ columns, each consisting of $\sim \delta^{-1/2}$ adjacent discs of radius $\delta$.}\label{fig3}
\end{center}
\end{figure}  
For instance, the standard counterexample in Figure \ref{fig3} portrays a $\delta$-discretised $1$-dimensional set, which projects into an essentially $1/2$-dimensional $\delta$-discretised set in $\sim \delta^{-1/2}$ $\delta$-separated directions around the vertical one. Nonetheless, there are no fans to be seen, and the reason is precisely the lack of separation for the directions of small projection.


\section{Proof of the main theorem}\label{mainProof}

\subsection{Finding a fan}\label{findingFan} As announced, the first step of the proof is to use the hypothetical (almost) sharpness of Kaufman's bound to find a fan structure, which is defined in Claim \ref{fanClaim} below. For this purpose, we can make do with weaker assumptions than those in Theorem \ref{main}: we consider a finite set $B \subset B(0,2)$, consisting of $\sim \delta^{-1}$ $\delta$-separated points and satisfying the $(\delta,1)$-set inequality
\begin{equation}\label{nonConB} |B \cap B(x,r)| \lesssim \frac{r}{\delta}, \qquad x \in \R^{2}, \: r \geq \delta. \end{equation}
We will later apply the conclusions to $B = A \times A$, which clearly satisfies \eqref{nonConB}. Moreover, for this subsection, it suffices to assume that the angles in $E$ satisfy the $(\delta,s)$-set criterion,
\begin{equation}\label{nonConE} |E \cap B(x,r)| \lesssim \left(\frac{r}{\delta}\right)^{s}, \qquad x \in S^{1}, \: \delta \leq r \leq 1, \end{equation}
which is weaker than $\delta^{s}$-separation. Finally, for convenience, we will assume that $E$ is a subset of $S^{1}$ instead of $\R$, and that $e_{0} := \tfrac{1}{\sqrt{2}}(1,1) \in E$.\footnote{This assumption will eventually be used to control the size of $\pi_{e_{0}}(A \times A) = A + A$. If $e_{0} \notin E$, the proof runs the same way by considering $A \times xA$, where we have good control for $A + xA$.} 

\begin{definition} A tube $T \subset \R^{2}$ of width $2\delta$ is $\sigma$-\emph{good}, if for some absolute constant $c \geq 1$,
\begin{displaymath} \mathop{\sum_{x,y \in B \cap T}}_{x \neq y} |x - y|^{s - 1} \lesssim \delta^{2(s - 1) + c(\sigma - s)}\log^{3/2}\left(\frac{1}{\delta}\right). \end{displaymath} 
\end{definition}

\begin{claim}\label{fanClaim} Assume that $E$ contains $\gtrsim \delta^{-\sigma}$ directions for some $0 < \sigma < s$ and satisfies \eqref{nonConE}. Also, assume that for each $e \in E$, the projection $\pi_{e}(B)$ only contains $\lesssim \delta^{-s}$ $\delta$-separated points. Then, there exists a constant $\tau = \tau(\sigma,s) > 0$ and a point $b_{0} \in B$ with the following property. There is a subset $E' \subset E$, and for each $e \in E'$ a $\sigma$-good $2\delta$-tube $T_{e}(b_{0})$ containing $b_{0}$ such that
\begin{equation}\label{form1} \left| B \cap \bigcup_{e \in E'} T_{e}(b_{0})\right| \gtrsim \delta^{\tau - 1}. \end{equation}
Furthermore, $\tau(\sigma,s) \to 0$, as $\sigma \to s$.
\end{claim}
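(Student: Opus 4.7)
My plan is to proceed in three steps: (i) pair counting via Cauchy--Schwarz to produce a popular point $b_{0} \in B$, (ii) a total-energy bound at $b_{0}$, and (iii) extracting a sub-fan of $\sigma$-good tubes via a dyadic annular decomposition.

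For (i), I would use the assumption $|\pi_{e}(B)|_{\delta} \lesssim \delta^{-s}$ and Cauchy--Schwarz on $\delta$-fibres to show, for each $e \in E$, that
\[ \#\{(x, y) \in B \times B : x \neq y,\, |\pi_{e}(x) - \pi_{e}(y)| \leq \delta\} \gtrsim \delta^{s - 2}. \]
Summing over $e \in E$ with $|E| \gtrsim \delta^{-\sigma}$ and pigeonholing on $b_{0} \in B$ would yield a point $b_{0}$ with $\sum_{e \in E} M_{e}(b_{0}) \gtrsim \delta^{s - \sigma - 1}$, where $M_{e}(b_{0}) := |B \cap T_{e}(b_{0}) \setminus \{b_{0}\}|$, so that many tubes through $b_{0}$ carry significant mass.

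For (ii), I would bound the total energy $\mathcal{E}_{b_{0}} := \sum_{e \in E} \sum_{x \neq y \in B \cap T_{e}(b_{0})} |x - y|^{s-1}$ by exchanging sums: a pair $(x, y)$ with $x, y \neq b_{0}$ contributes only when $(b_{0}, x, y)$ is nearly collinear, since the constraints $|e \cdot (x - b_{0})| \leq \delta$ and $|e \cdot (y - b_{0})| \leq \delta$ force $e$ to lie in the intersection of two arcs around $(x - b_{0})^{\perp}$ and $(y - b_{0})^{\perp}$. The $(\delta, s)$-set property of $E$ then bounds $\#\{e \in E : x, y \in T_{e}(b_{0})\} \lesssim (\max(|x - b_{0}|, |y - b_{0}|))^{-s}$, and a dyadic summation on $|y - b_{0}|$ combined with the $(\delta, 1)$-set inequality for $B$ would give $\mathcal{E}_{b_{0}} \lesssim \delta^{-2} \log(1/\delta)$. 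A routine averaging lets us assume the same $b_{0}$ satisfies both this bound and the mass bound from (i).

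For (iii), I would set $E' := \{e \in E : T_{e}(b_{0}) \text{ is } \sigma\text{-good}\}$ and use the elementary inequality $\mathrm{bad}(T_{e}(b_{0})) \gtrsim M_{e}(b_{0})^{2}$ (each pair contributes $|x - y|^{s - 1} \gtrsim 1$). Cauchy--Schwarz against $\mathcal{E}_{b_{0}}$ then gives
\[ \sum_{e \notin E'} M_{e}(b_{0}) \lesssim \mathcal{E}_{b_{0}} \cdot \mathrm{thr}^{-1/2}, \qquad \mathrm{thr} = \delta^{2(s-1) + c(\sigma - s)}\log^{3/2}(1/\delta), \]
and for the constant $c$ in the threshold chosen sufficiently large this is a negligible fraction of $\sum_{e \in E} M_{e}(b_{0})$, so $\sum_{e \in E'} M_{e}(b_{0}) \gtrsim \delta^{s - \sigma - 1}$. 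A dyadic decomposition $B \setminus \{b_{0}\} = \bigsqcup_{k} A_{k}$ with $A_{k} = \{y : 2^{-k-1} < |y - b_{0}| \leq 2^{-k}\}$ would isolate an active scale $k^{*}$ at which $\sum_{e \in E'} |B \cap T_{e}(b_{0}) \cap A_{k^{*}}|$ is still $\gtrsim \delta^{s - \sigma - 1}/\log(1/\delta)$. Passing to a maximal $(C 2^{k^{*}}\delta)$-separated subset of $E'$ (of size $\gtrsim |E'| \cdot 2^{-k^{*} s}$ by the $(\delta, s)$-set property of $E$) makes the relevant tube-annulus intersections essentially disjoint, producing $|B \cap \bigcup_{e \in E'} T_{e}(b_{0})| \gtrsim \delta^{\tau - 1}$ with $\tau(\sigma, s) \sim (s - \sigma)/(1 - s) \to 0$ as $\sigma \to s$.

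The hardest part will be the trade-off in step (iii) between removing non-$\sigma$-good tubes and keeping enough mass in $E'$: the naive Markov bound on $|E \setminus E'|$ can easily exceed $|E|$ when $\sigma$ is close to $s$, so the argument must operate at the level of mass rather than cardinality. The calibration of the exponent $2(s-1) + c(\sigma - s)$ and of the $\log^{3/2}(1/\delta)$ factor in the $\sigma$-good threshold is precisely what makes the Cauchy--Schwarz removal above effective and enables the final covering exponent $\tau(\sigma, s)$ to tend to $0$ as $\sigma \to s$.
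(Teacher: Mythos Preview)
Your overall architecture is close to the paper's, and steps (i) and the final covering argument in (iii) are essentially the same computation as the paper's (the interpolation $|A_{j}(b)| \leq (2^{j}/\delta)^{s}\delta^{(\tau-1)(1-s)}$ is exactly your dyadic multiplicity bound rearranged, and it gives $\tau \sim (s-\sigma)/(1-s)$ either way). The genuine gap is in your removal of non-$\sigma$-good tubes.

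Your pointwise energy bound $\mathcal{E}_{b_{0}} \lesssim \delta^{-2}\log(1/\delta)$ is correct, but it is also essentially sharp, and it is too weak for your Cauchy--Schwarz pruning. With threshold $\mathrm{thr} = \delta^{2(s-1)+c(\sigma-s)}\log^{3/2}(1/\delta)$ your inequality gives
\[
\sum_{e \notin E'} M_{e}(b_{0}) \leq \mathcal{E}_{b_{0}}\cdot\mathrm{thr}^{-1/2} \lesssim \delta^{-1-s+c(s-\sigma)/2},
\]
and for this to be $\ll \delta^{s-\sigma-1}$ you need $c > 2(2s-\sigma)/(s-\sigma)$, which blows up as $\sigma \to s$. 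Since the definition of ``$\sigma$-good'' fixes $c$ as an absolute constant, your fan $E'$ need not carry any mass at all in the regime of interest. The ``calibration'' you appeal to in your last paragraph does not save this: the paper's proof actually produces $\sigma$-good tubes with $c=2$, and no absolute $c$ makes your inequality work.

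The paper avoids this by reversing the order of your steps (ii)--(iii) and (i): it first averages the tube energy over \emph{all} tubes $T_{e,n}$ in all directions (so $\sim \delta^{-s}|E|$ tubes, not the $|E|$ tubes through a single $b_{0}$), obtaining per-tube average energy $\sim \delta^{2(s-1)+(\sigma-s)}$; it then discards, direction by direction, the tubes whose energy exceeds $D$ times this average with $D \sim \delta^{(\sigma-s)/2}\log^{1/2}(1/\delta)$, using Cauchy--Schwarz to check that the discarded tubes in each direction still cover only a small fraction of $B$. Only after this global pruning does it run the pair-counting/pigeonhole argument to find $b_{0}$, now automatically inside the good tubes. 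In short: prune globally, then find the fan --- not the other way around.
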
 

\begin{proof}[Proof of claim] The first step is to find plenty of good tubes to play with. For each $e \in E$, partition $\R^{2}$ into tubes of width $2\delta$ perpendicular to $e$. Then, fix $N \sim \delta^{-s}$ and, for each $e \in E$, pick exactly $N$ of these tubes $T_{e,1},\ldots,T_{e,N}$ such that 
\begin{displaymath} B \subset \bigcup_{n = 1}^{N} T_{e,n}, \qquad e \in E. \end{displaymath}
This is possible by the assumption on $\pi_{e}(B)$. The next estimation shows that, on average, the tubes $T_{e,n}$ are $\sigma$-good:
\begin{align*} \frac{1}{|E|N} \sum_{e \in E} \sum_{n = 1}^{N}  \mathop{\sum_{x,y \in B \cap T_{e,n}}}_{x \neq y} |x - y|^{s - 1} & = \frac{1}{|E|N} \mathop{\sum_{x,y \in B}}_{x \neq y} \sum_{e \in E} \sum_{n = 1}^{N} |x - y|^{s - 1} \chi_{(x,y) \in T_{e,n}^{2}}(x,y)\\
& \lesssim \frac{1}{|E|N} \mathop{\sum_{x,y \in B}}_{x \neq y} |x - y|^{-1} \lesssim \frac{\delta^{-2}}{|E|N}\log\left(\frac{1}{\delta}\right)\\
& \sim \delta^{\sigma + s - 2}\log\left(\frac{1}{\delta}\right) = \delta^{2(s - 1) + (\sigma - s)}\log\left(\frac{1}{\delta}\right).  \end{align*}
Pick a large constant $C > 0$, and discard all the directions $e \in E$ such that 
\begin{displaymath} \frac{1}{N} \sum_{n = 1}^{N} \mathop{\sum_{x,y \in B \cap T_{e,n}}}_{x \neq y} |x - y|^{s - 1} \geq C\delta^{\sigma + s - 2}\log\left(\frac{1}{\delta}\right). \end{displaymath}
The remaining directions are called $E_{0}$, and the preceding computation shows that $|E_{0}| \gtrsim |E| \sim \delta^{-s}$ for large enough $C$. Next, for all $e \in E_{0}$, discard the tubes $T_{e,n}$ such that
\begin{displaymath} \mathop{\sum_{x,y \in B \cap T_{e,n}}}_{x \neq y} |x - y|^{s - 1} \geq D\delta^{\sigma + s - 2}\log\left(\frac{1}{\delta}\right), \end{displaymath} 
where $D \sim \sqrt{\log(1/\delta)}\delta^{(\sigma - s)/2}$ is a constant to be determined shortly. We wish to make sure that not too many points of $B$ lie in the discarded tubes. By definition of $E_{0}$, the number of the discarded tubes, the family of which is denoted by $\calT_{e}^{b}$, is bounded by $|\calT_{e}^{b}| \leq (C/D)N$. Combining this fact, plus the definition of $E_{0}$, and the Cauchy-Schwarz estimate 
\begin{displaymath} \frac{1}{N} \sum_{n = 1}^{N} \mathop{\sum_{x,y \in B \cap T_{e,n}}}_{x \neq y} |x - y|^{s - 1} \gtrsim \frac{1}{N} \sum_{T \in \calT_{e}^{b}} |B \cap T_{e,n}|^{2} \gtrsim \frac{1}{(C/D)^{2}N^{2}} \left( \sum_{T \in \calT_{e}^{b}} |B \cap T_{e,n}| \right)^{2} \end{displaymath} 
shows that
\begin{displaymath} \left| B \cap \bigcup_{T \in \calT_{e}^{b}} T \right| \lesssim \sqrt{C\log(1/\delta)\delta^{\sigma + s - 2}} \cdot (C/D)N \sim \frac{C^{3/2}}{D}\delta^{-1 + (\sigma - s)/2}\sqrt{\log(1/\delta)}. \end{displaymath}
In other words, since $|B| \sim \delta^{-1}$, one can choose $D \sim C^{3/2}\sqrt{\log(1/\delta)}\delta^{(\sigma - s)/2}$ so large that the following holds: for each good direction $e \in E_{0}$, the points of $B$ covered by the "bad" tubes in $\calT_{e}^{b}$ only constitute a $1/10$ of all the points in $B$; hence, if $\calT_{e} := \{T_{e,1},\ldots,T_{e,N}\} \setminus \calT_{e}^{b}$, we have
\begin{equation}\label{form8} \left| B \cap \bigcup_{T \in \calT_{e}} T \right| \gtrsim |B| \sim \delta^{-1}, \end{equation}
and for all $T \in \calT_{e}$
\begin{displaymath}  \mathop{\sum_{x,y \in B \cap T}}_{x \neq y} |x - y|^{s - 1} \leq D\delta^{\sigma + s - 2}\log\left(\frac{1}{\delta}\right) \lesssim \delta^{2(s - 1) + 2(\sigma - s)}\log^{3/2}\left(\frac{1}{\delta}\right), \end{displaymath}
which means that the tubes in $\calT_{e}$ are $\sigma$-good.

Next, we start looking for a "fan" structure. Given $b,b' \in B$, $b \neq b'$, and $e \in E_{0}$, let $b \sim_{e} b'$ stand for the relation of $b$ and $b'$ sharing a tube in $\calT_{e}$. First, we make a quick calculation concerning the number of pairs $(b,b')$ such that $b \sim_{e} b'$ for some $e \in E_{0}$. Assume that every tube in $\calT_{e}$ contains at least two points of $B$ (if this is not the case, discard the single-point tubes, and use $|\calT_{e}| \leq N \sim \delta^{-s}$ to conclude that \eqref{form8} still holds for the remaining family of tubes). Then, from Cauchy-Schwarz and \eqref{form8}, 
\begin{align*} |\{(b,b') : b \sim_{e} b'\}| & \gtrsim \sum_{T \in \calT_{e}} |B \cap T|^{2}\\
& \geq \frac{1}{|\calT_{e}|} \left(\sum_{T \in \calT_{e}} |B \cap T| \right)^{2} \gtrsim \delta^{s - 2}, \end{align*} 
and so, recalling that $|E_{0}| \gtrsim \delta^{-\sigma}$,  
\begin{equation}\label{form2} \sum_{b, b' \in B} |\{e \in E_{0} : b \sim_{e} b'\}| = \sum_{e \in E_{0}} |\{(b,b') : b \sim_{e} b'\}| \gtrsim \delta^{s - \sigma - 2}. \end{equation}

Assume that the conclusion of our claim, namely \eqref{form1}, fails for every $b_{0} = b \in B$, and for some
\begin{equation}\label{form3} \tau > \frac{s - \sigma}{1 - s}. \end{equation}
Fixing $b \in B$, this information can be used to bound the quantity
\begin{displaymath} \sum_{b' \in B} |\{e \in E_{0} : b \sim_{e} b'\}| \end{displaymath} 
from above as follows. First, the non-concentration condition \eqref{nonConE} for $E$, thus $E_{0}$, gives the universal bound
\begin{equation}\label{form7} |\{e \in E_{0} : b \sim_{e} b'\}| \lesssim \frac{1}{|b - b'|^{s}}, \end{equation}
simply because the set of possible directions $e \in S^{1}$ such that $b \sim_{e} b'$ are contained in two arcs of length $\lesssim \delta/|b - b'|$. For $e \in E_{0}$ and $b \in B$, let $T_{e}(b)$ be the unique tube $T_{e,n}$ containing $b$, if $T_{e,n} \in \calT_{e}$, and let $T_{e}(b) = \emptyset$ otherwise. If
\begin{displaymath} b' \notin B \cap \bigcup_{e \in E_{0}} T_{e}(b) =: G(b), \end{displaymath}
then simply
\begin{displaymath} \{e \in E_{0} : b \sim_{e} b'\} = \emptyset. \end{displaymath}
Hence,
\begin{displaymath} \sum_{b,b'} |\{e \in E_{0} : b \sim_{e} b'\}| \lesssim \sum_{b \in B} \sum_{b' \in G(b)} \frac{1}{|b - b'|^{s}}. \end{displaymath} 
For $\delta \leq 2^{j} \leq 1$, consider the sets $A_{j}(b) := \{b' \in G(b) : |b - b'| \sim 2^{j}\}$, for which the non-concentration inequality \eqref{nonConB} gives the bound
\begin{displaymath} |A_{j}(b)| \lesssim \min\left\{\frac{2^{j}}{\delta},\delta^{\tau - 1}\right\} \leq \left(\frac{2^{j}}{\delta}\right)^{s} \cdot \delta^{(\tau - 1)(1 - s)}. \end{displaymath}
Plugging this into the previous equation gives
\begin{align*} \sum_{b,b'} |\{e \in E_{0} : b \sim_{e} b'\}| & \lesssim \sum_{b \in B} \sum_{\delta \leq 2^{j} \leq 1} \sum_{b' \in A_{j}(B)} \frac{1}{|b - b'|^{s}}\\
& \lesssim \sum_{b \in B} \sum_{\delta \leq 2^{j} \leq 1} 2^{-js} \left(\frac{2^{j}}{\delta}\right)^{s} \cdot \delta^{(\tau - 1)(1 - s)}\\
& \sim |B| \cdot \log \left(\frac{1}{\delta}\right) \cdot \delta^{-1 + \tau(1 - s)} \sim \delta^{-2 + \tau(1 - s)} \cdot \log \left(\frac{1}{\delta}\right). \end{align*} 
This upper bound contradicts a combination of \eqref{form2} and \eqref{form3}. The claim is hence established for any $\tau(\sigma,s) > (s - \sigma)/(1 - s)$. In other words, we have found a point $b_{0} \in B$ such that $|G(b_{0})| \gtrsim \delta^{\tau - 1}$. Finally, for this particular $b_{0}$, let $E' \subset E_{0}$ consist of those directions such that $T_{e}(b_{0}) \neq \emptyset$ (which implies that $T_{e}(b_{0}) \in \calT_{e}$ for $e \in E'$).
\end{proof}

\subsection{Using the fan} As stated, we apply the claim with $B = A \times A$. In the second -- and final -- phase of the proof, we will need the full strength of the hypotheses. The key idea is borrowed from J. Solymosi \cite{So}: if a product set of the form $A \times A$ is (to a large extent) contained in a "fan" of thin tubes, such as $\bigcup T_{e}(b_{0})$, then the sumset $A + A$ has quantitatively larger size than $A$.

To prove this, we assume, without loss of generality, that $E' = E$, that the directions in $E$ lie in the northeast quartile $Q$ of $\R^{2}$, and also that
\begin{equation}\label{form10} \left| (A \times A) \cap \bigcup_{e \in E} T_{e}(b_{0}) \cap (b_{0} + Q) \right| \gtrsim \delta^{\tau - 1}. \end{equation} 
In English, a significant part of the points in $(A \times A) \cap \bigcup T_{e}(b_{0})$ is contained to the northeast of $b_{0}$. The rest of the points in $A \times A$ are not needed, so we assume that $(A \times A) \cap T_{e}(b_{0})$ is contained in the quartile $b_{0} + Q$ for every $e \in E$. 

Write 
\begin{displaymath} \kappa := \kappa(\sigma,s) := \max\{\tau(\sigma,s), c(s - \sigma)\}. \end{displaymath}
For each tube $e \in E'$, discard all the points $x \in (A \times A) \cap T_{e}(b_{0})$ such that
\begin{equation}\label{form11} \mathop{\sum_{y \in (A \times A) \cap T_{e}(b_{0})}}_{x \neq y} |x - y|^{s - 1} \geq C\delta^{s - 1 - 2\kappa}\log^{3/2}\left(\frac{1}{\delta}\right) \end{equation}
for a large constant $C > 0$. Since $T_{e}(b_{0})$ is a good tube, the number of such points is bounded by $\lesssim \delta^{s - 1 + \tau}/C$, and since the number of tubes $T_{e}(b_{0})$, $e \in E$, is bounded by $\lesssim \delta^{-s}$, we find that only $\lesssim \delta^{\tau - 1}/C$ points $x \in A \times A$ were discarded altogether. Hence, choosing $C > 0$ large enough, \eqref{form10} continues to hold with $A \times A$ replaced by the set of remaining points, and moreover we can now assume that the inequality opposite to \eqref{form11} holds for all $x \in A \times A$ and $e \in E$. We will use this fact in the slightly weaker form
\begin{equation}\label{form12} |(A \times A) \cap T_{e}(b_{0}) \cap B(x,r)| \lesssim \delta^{-2\kappa}\log^{3/2}\left(\frac{1}{\delta}\right) \cdot \left(\frac{r}{\delta}\right)^{1 - s}, \quad x \in \R^{2},\; r \geq \delta.  \end{equation}
We also need to arrange so that there are no points of $A \times A$ too close to $b_{0}$: we throw away all the points $x \in A \times A$ that lie in a ball $B(b_{0},C\delta^{1 - s})$, where $C > 0$ is an absolute constant large enough for future purposes. This removal procedure only costs $\lesssim C\delta^{1 - s}/\delta = C\delta^{-s}$ points according to the non-concentration inequality for $A$, so \eqref{form10} remains valid for small enough $\delta > 0$.

Observe that \eqref{form12} combined with the assumption $A \times A \subset B(0,1)$ implies that any tube $T_{e}(b_{0})$, $e \in E$, can contain at most $\lesssim \delta^{-2\kappa}\log^{3/2}(1/\delta) \cdot \delta^{s - 1}$ points of $A \times A$. Combined with \eqref{form10} and $|E| \lesssim \delta^{-s}$, this means that there exists a subcollection $\calT$ of the tubes $T_{e}(b_{0})$ of cardinality 
\begin{equation}\label{form14} |\calT| \gtrsim \delta^{3\kappa}\log^{-3/2}\left(\frac{1}{\delta}\right) \cdot \delta^{-s} \gtrsim \delta^{4\kappa - s} \end{equation}
such that
\begin{equation}\label{form13} |(A \times A) \cap T| \gtrsim \delta^{\kappa + s - 1}, \qquad T \in \calT. \end{equation} 
In the sequel, only these tubes will be of interest, and we enumerate them, $\calT = \{T_{1},\ldots,T_{N}\}$, so that tubes with consecutive indices correspond to consecutive directions in $E$.



\begin{figure}[h!]
\begin{center}
\includegraphics[scale = 0.6]{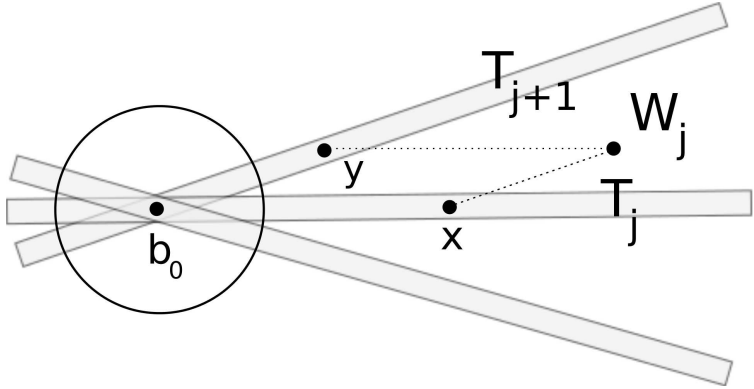}
\caption{The vector sum $(x + y) - b_{0}$ in the white conical region $W_{j}$.}\label{fig1}
\end{center}
\end{figure}

The next, and final, step of the proof is to consider vector sums of the form
\begin{equation}\label{form5} b_{0} + (x - b_{0}) + (y - b_{0}) = (x + y) - b_{0}, \end{equation}
where $x \in (A \times A) \cap T_{j}$ and $y \in (A \times A) \cap T_{j + 1}$ for some $1 \leq j \leq N - 1$, see Figure \ref{fig1}. Such vector sums are always contained in the conical region spanned by two boundary lines of the tubes $T_{j}$ and $T_{j + 1}$. \emph{A fortiori}, if $x,y$ lie outside the ball $B(b_{0},C\delta^{1 - s})$ for large enough $C$, then elementary geometry and the $\delta^{s}$-separation of the directions $e \in E$ gives that the sum in \eqref{form5} will be contained in the \emph{white conical region $W_{j}$} outside the tubes $T_{e}(b_{0})$, depicted in Figure \ref{fig1}. The regions $W_{j}$ are disjoint for distinct pairs of consecutive tubes, so the plan is to find many $\delta$-separated vector sums in each $W_{j}$, and then multiply by their number $|W_{j}| = N - 1$. By definition, the vector sums of the form \eqref{form5} are contained in the set
\begin{displaymath} (A \times A) + (A \times A) - b_{0} = (A + A) \times (A + A) - b_{0}, \end{displaymath}
so finding many $\delta$-separated vector sums will result in a lower bound for the number of $\delta$-separated points in $(A + A) \times (A + A)$, and hence $A + A$.


Now, we start forming the vector sums from pairs of points in $(A \times A) \cap T_{j}$ and $(A \times A) \cap T_{j + 1}$. We wish to find $\delta$-separated such sums, so we need to compute, how many vector sums can land within a distance $\delta$ of each other. Here the separation of the directions in $E$ is essential.
\begin{figure}[h!]
\begin{center}
\includegraphics[scale = 0.7]{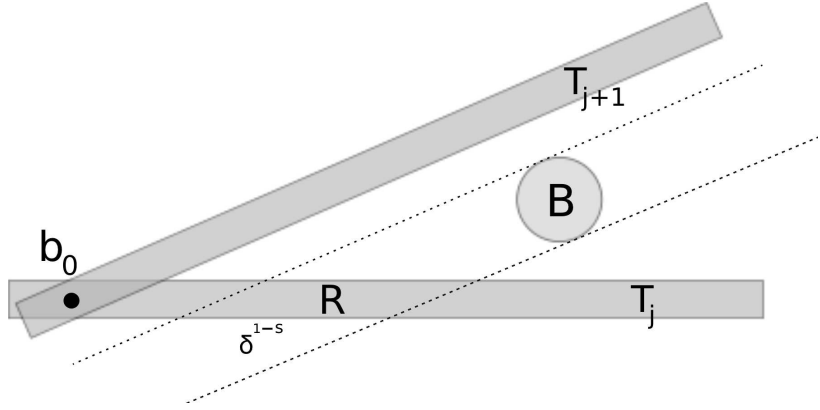}
\caption{In order for the sum $(x + y) - b_{0}$ to hit an arbitrary $\delta$-ball $B$, the point $x \in T_{j}$ needs to lie in the region $R \subset T_{j}$ of width $\sim \delta^{1 - s}$.}
\end{center}
\end{figure}
Given an arbitrary $\delta$-ball $B \subset W_{j}$, it is only possible for the sum $(x + y) - b_{0}$ to hit $B$, if $x \in T_{j}$, say, is chosen inside a rectangle $R \subset T_{j}$ of dimensions $\sim \delta^{1 - s} \times \delta$. For any such $x$, there are only $\lesssim 1$ choices of $y \in (A \times A) \cap T_{j}$ such that $(x + y) - b_{0} \in B$. Applying the non-concentration inequality \eqref{form12}, we may conclude that there exist at most 
\begin{displaymath} \lesssim \delta^{-2\kappa}\log^{3/2}\left(\frac{1}{\delta}\right)\cdot \left(\frac{\delta^{1 - s}}{\delta}\right)^{1 - s} \lesssim \delta^{s(s - 1) - 3\kappa} \end{displaymath}
pairs $(x,y)$ such that $x \in (A \times A) \cap T_{j}$, $y \in (A \times A) \cap T_{j + 1}$, and $(x + y) - b_{0} \in B$.

The conclusion above holds for an arbitrary $\delta$-ball $B \subset W_{j}$, so the we may estimate the number of $\delta$-separated sums $(x + y) - b_{0} \in W_{j}$ from below by
\begin{displaymath} \gtrsim \delta^{s(1 - s) + 3\kappa} \cdot |(A \times A) \cap T_{j}| \cdot |(A \times A) \cap T_{j + 1}|  \gtrsim \delta^{s(1 - s) + 5\kappa + 2(s - 1)} = \delta^{5\kappa + (2 - s)(s - 1)}, \end{displaymath}
the second inequality being the content of \eqref{form13}. Summing up the contributions from the various regions $W_{j}$, $1 \leq j \leq N - 1$, and recalling that $N \gtrsim \delta^{4\kappa - s}$ by \eqref{form14}, we obtain altogether
\begin{displaymath} \gtrsim \delta^{9\kappa + (2 - s)(s - 1) - s} = \delta^{9\kappa + 2s - s^{2} - 2} \end{displaymath}
$\delta$-separated sums of the form $(x + y) - b_{0}$. As we observed earlier, this means that the set $(A + A) \times (A + A) - b_{0}$ contains at least this many $\delta$-separated points, and so $A + A$ contains at least
\begin{displaymath} \gtrsim \delta^{5\kappa + s - s^{2}/2 - 1} \end{displaymath}
$\delta$-separated points. Finally recall that our starting assumption was that $A + A$ only contains $\lesssim \delta^{-s}$ $\delta$-separated points. This forces
\begin{displaymath} \delta^{5\kappa + s - s^{2}/2 - 1} \lesssim \delta^{-s}, \end{displaymath}
which in the range $s < 2 - \sqrt{2}$ this gives a lower bound for $\kappa$ -- and hence an upper bound for $\sigma < s$, because $\kappa \to 0$, as $\sigma \to s$. The proof of Theorem \ref{main} is complete.

\appendix

\section{Finding many points on a line}

This final section contains the proof of Theorem \ref{mainDiscrete}, repeated below:
\begin{thm} Let $P \subset \R^{2}$ be a finite set with $|P| = n$. Let $s \geq 1/2$, and assume that $G \subset P \times P$ is a set of pairs of cardinality $|G| \geq n^{1 + s}$ such that
\begin{displaymath} |S(G)| \leq \frac{cn^{2s - 1}}{\log^{12} n} \end{displaymath}
for a suitable absolute constant $c > 0$. Then $|\ell \cap P| \gtrsim n^{s}/\log^{4} n$ for some line $\ell \subset \R^{2}$. This result is sharp in the sense that assuming $|S(G)| \leq n^{t}$ for any $t < 2s - 1$ does not improve the conclusion, and on the other hand the slightly weaker assumption $|S(G)| \lesssim n^{2s - 1}$ could, at best, yield the conclusion $|\ell \cap P| \gtrsim n^{1/2}$ for some line $\ell$. 
\end{thm}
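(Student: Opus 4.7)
The plan is to argue by contradiction via the Szemer\'edi--Trotter incidence bound. Suppose every line of $\R^{2}$ meets $P$ in at most $K := c' n^{s}/\log^{4} n$ points, for an absolute constant $c' > 0$ to be fixed later. I will show this, combined with $|S(G)| \leq cn^{2s-1}/\log^{12} n$, forces $|G| < n^{1+s}$, contradicting the hypothesis on $|G|$.

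For the upper bound on $|G|$: every pair $(p, q) \in G$ lies on a unique line with direction in $S(G)$, and any such line $\ell$ contributes at most $\binom{r(\ell)}{2}$ pairs to $G$, where $r(\ell) := |\ell \cap P|$. Writing $L_{0} := \{\ell : \mathrm{dir}(\ell) \in S(G),\ r(\ell) \geq 2\}$, this gives $|G| \lesssim \sum_{\ell \in L_{0}} r(\ell)^{2}$. I decompose this sum dyadically according to $r(\ell) \sim k$ and bound the count $|L_{0}^{k}| := |\{\ell \in L_{0} : r(\ell) \in [k, 2k)\}|$ by the minimum of two natural estimates: the trivial bound $|L_{0}^{k}| \leq |S(G)| \cdot n/k$ (at most $n/k$ parallel lines of multiplicity $\geq k$ in each direction of $S(G)$), and the Szemer\'edi--Trotter bound $|L_{0}^{k}| \lesssim n^{2}/k^{3} + n/k$. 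The main terms of the two estimates coincide at the threshold $k_{0} := \sqrt{n/|S(G)|}$. Splitting the dyadic sum into $k \in [2, k_{0}]$, $k \in [k_{0}, \sqrt{n}]$, and $k \in [\sqrt{n}, K]$, each range is a geometric sum dominated by its extremal term, yielding contributions of $\lesssim n^{3/2}|S(G)|^{1/2}$, $\lesssim n^{3/2}|S(G)|^{1/2}$, and $\lesssim nK$ respectively. Hence
\[ |G| \lesssim n^{3/2}|S(G)|^{1/2} + nK. \]

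Substituting the hypotheses $|S(G)| \leq c n^{2s-1}/\log^{12} n$ and $K = c' n^{s}/\log^{4} n$, the bound becomes
\[ |G| \leq C\left(\frac{\sqrt{c}}{\log^{6} n} + \frac{c'}{\log^{4} n}\right) n^{1+s} \]
for an absolute constant $C$. Choosing $c$ and $c'$ small enough forces $|G| < n^{1+s}$, contradicting $|G| \geq n^{1+s}$. Thus some line meets $P$ in $\gtrsim n^{s}/\log^{4} n$ points, as claimed.

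I do not foresee a conceptual obstacle: the proof is a careful quantitative execution of a dyadic Szemer\'edi--Trotter pigeonhole. The main delicate point is that the $\log^{12}$ denominator in the hypothesis exists precisely to absorb the $\log^{6}$ loss incurred in passing from $|S(G)|$ to $|S(G)|^{1/2}$, while the $\log^{4}$ denominator in the conclusion is calibrated to the residual term $nK$. The sharpness assertions can be verified on an $n^{1-s} \times n^{s}$ grid with $G$ restricted to axis-aligned pairs, which realises $|G| \sim n^{1+s}$, $|S(G)| = 2$, and a richest line with exactly $n^{s}$ points; for the second sharpness statement, a $\sqrt{n} \times \sqrt{n}$ grid with $G = P \times P$ gives $|G| \sim n^{2}$, $|S(G)| \sim n = n^{2s-1}$ (with $s = 1$), and a richest line of size $\sqrt{n}$.
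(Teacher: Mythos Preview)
Your argument is correct and is a genuinely different route from the paper's. The paper proceeds by a double pigeonhole: first it finds a dyadic level $j$ so that many directions $e$ have $\sim 2^{j}$ pairs of $G$ in direction $e$ (with $2^{j}$ forced large by the smallness of $|S(G)|$), then for each such $e$ it pigeonholes again to find a popular line-richness $2^{k}$, uniformises $k$ over directions, and finally feeds the resulting line family into the Szemer\'edi--Trotter incidence inequality $|I(\mathcal{L},P)| \lesssim |\mathcal{L}|^{2/3}|P|^{2/3} + |\mathcal{L}| + |P|$ to force $2^{k} \gtrsim n^{s}/\log^{4} n$. Your approach instead bounds $|G|$ from above in one pass: you write $|G| \lesssim \sum_{\ell} r(\ell)^{2}$, decompose dyadically in $r(\ell)$, and at each scale use whichever is stronger of the trivial bound $|S(G)| \cdot n/k$ and the Szemer\'edi--Trotter rich-lines corollary $n^{2}/k^{3} + n/k$. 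Summing gives the clean inequality $|G| \lesssim n^{3/2}|S(G)|^{1/2} + nK$, from which the contradiction is immediate.

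Your version is shorter and yields an inequality of independent interest; the paper's version is more constructive in that it explicitly isolates the rich line family, which is closer in spirit to the continuous ``fan'' argument in the main theorem. One minor remark on your sharpness discussion: your second example (the $\sqrt{n}\times\sqrt{n}$ grid with $G = P \times P$) only treats the case $s = 1$, whereas the paper's Example~\ref{ex1} handles all $s \in [1/2,1]$ by restricting $G$ to pairs whose direction lies in a small sub-grid. This is needed to see that the hypothesis $|S(G)| \lesssim n^{2s-1}$ is genuinely too weak across the whole range of $s$, not just at the endpoint.
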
 

We establish the sharpness claims first through two examples; this will hopefully illustrate, where the numerology in the exponents comes from. The first example, a simple grid with $n$ points, shows that the assumption $|S(G)| \lesssim n^{2s - 1}$ is insufficient for the desired conclusion:

\begin{ex}\label{ex1} Let $P = \{0,\ldots,n^{1/2}\} \times \{0,\ldots,n^{1/2}\}$. Write $r := n^{s - 1/2}$, and let $P_{r} := \{p \in P : |p| \leq r\}$. Then $|P_{r}| \sim r^{2}$, and the various pairs of points in $P_{r}$ span $\sim r^{2}$ slopes. As one easily checks, for each of these slopes $e$, there are $\gtrsim n^{3/2}/r$ pairs $(p,q) \in P \times P$ such that $s(p,q) = e$. So, we can find $\gtrsim r^{2} \cdot n^{3/2}/r = r \cdot n^{3/2} = n^{1 + s}$ pairs spanning $\sim r^{2} = n^{2s - 1}$ slopes. In particular, the assumptions $|G| \gtrsim n^{1 + s}$ and $S(G) \lesssim n^{2s - 1}$ \textbf{do not} guarantee a line $\ell$ with $|\ell \cap P| \gg n^{1/2}$.
\end{ex}

The second example shows, that the strengthening the assumption to $|S(G)| \leq n^{t}$, $t < 2s - 1$, is useless in view of the conclusion:

\begin{ex}\label{ex2} Fix $1/2 < s < 1$, draw $k \sim n^{1 - s}$ parallel lines, with slope $e$, say, and place $n/k \sim n^{s}$ points on each line. Call these points $P$. Then $|P| = n$, and there are $(n/k)^{2} \cdot k = n^{2}/k \sim n^{1 + s}$ pairs in $(p,q) \in P \times P$ with $s(p,q) = e$. If $G$ is the set of those pairs, we have $|G| \gtrsim n^{1 + s}$ and $S(G) = 1 \lesssim n^{t}$ \textbf{for any} $t > 0$.
\end{ex}

Finally, we prove Theorem \ref{mainDiscrete}:

\begin{proof}[Proof of Theorem \ref{mainDiscrete}] By assumption, slopes of the pairs in $G$ are contained in a set $E \subset S^{1}$ of cardinality $|E| \ll n^{2s - 1}/\log^{12} n$. For $j \geq 0$, let $E_{j} \subset E$ be the subset of slopes $e$, for which there exist between $2^{j}$ and $2^{j + 1}$ pairs $(p,q) \in G$ with $s(p,q) = e$. Then
\begin{displaymath} n^{1 + s} \leq |G| \sim \sum_{j = 0}^{\infty} |E_{j}| \cdot 2^{j}, \end{displaymath}
so there exists $j \geq 0$ with
\begin{equation}\label{form20} \frac{n^{1 + s}}{2^{j} \cdot j^{2}} \lesssim |E_{j}| \lesssim \frac{n^{1 + s}}{2^{j}}. \end{equation}
Moreover, $j$ can be chosen rather large. Namely, for $T \in \N$, 
\begin{displaymath} \sum_{j = 0}^{T} |E_{j}| \cdot 2^{j} \leq \sum_{j = 0}^{T} |E| \cdot 2^{j} \leq \frac{cn^{2s - 1}}{\log^{12} n} \sum_{n = 0}^{T} 2^{j} \sim \frac{cn^{2s - 1}}{\log^{12} n} \cdot 2^{T}, \end{displaymath}
and this is $\ll |G| = n^{1 + s}$, as long as $2^{T} \ll c^{-1}n^{2 - s} \cdot \log^{12} n$. Consequently, $j$ can be chosen satisfying \eqref{form20} and
\begin{equation}\label{form21} 2^{j} \geq Cn^{2 - s} \cdot \log^{12} n, \end{equation}
where $C$ can be made large by assuming that $c$ is small. 

Fix $j$ with these good properties. Then, for each $e \in E_{j}$, let $\calL_{e,k}$, $k \geq 0$, be the set of lines with slope $e$, which contain between $2^{k}$ and $2^{k + 1}$ points of $P$. Since there are $\sim 2^{j}$ pairs of $G \subset P \times P$ contained on such lines, one has
\begin{displaymath} 2^{j} \lesssim \sum_{k = 0}^{\infty} |\calL_{e,k}| \cdot 2^{2k}. \end{displaymath}
Consequently, there exists $k \geq 0$ satisfying
\begin{equation}\label{form22} |\calL_{e,k}| \gtrsim \frac{2^{j}}{2^{2k} \cdot k^{2}}. \end{equation}
As before, $k$ can be chosen fairly large. Namely, the cardinality of $\calL_{e,k}$ is always bounded by $|\calL_{e,k}| \lesssim n/2^{k}$, so for $T \in \N$ one has the estimate
\begin{displaymath} \sum_{j = 0}^{T} |\calL_{e,k}| \cdot 2^{2k} \lesssim n \cdot \sum_{j = 0}^{T} 2^{k} \lesssim n \cdot 2^{T}. \end{displaymath}
The left hand side is $\ll 2^{j}$, as long as $2^{T} \ll 2^{j}/n$, and this means that $k \geq 0$ can be found satisfying \eqref{form22} and
\begin{equation}\label{form23} 2^{k} \gtrsim \frac{2^{j}}{n} \geq Cn^{1 - s} \cdot \log^{12} n, \end{equation}
where the latter inequality follows from \eqref{form21}. Now, this $k$ depends on $e$, of course, but by pigeonholing once more -- and observing that $2^{k} \lesssim n$ -- one can find a subset $E'_{j} \subset E_{j}$ of cardinality
\begin{displaymath} \frac{|E_{j}|}{\log n} \lesssim |E_{j}'| \leq |E_{j}| \end{displaymath}
such that the same $k$ works for all $e \in E_{j}'$. Finally, for all $e \in E_{j}'$, remove some lines from $\calL_{e,k}$ until $\lesssim 2^{j - 2k}$ lines remain; this is possible by \eqref{form22}.

The proof is completed by appealing to the Szemer\'edi-Trotter theorem \cite{ST}, which asserts that the incidences $I(\calL,P) = \{(\ell,p) : \ell \in \calL, \: p \in \ell \cap P\}$ between a family of lines $\calL$ and a family of points $P$ satisfies
\begin{displaymath} |I(\calL,P)| \lesssim |\calL|^{2/3}|P|^{2/3} + |\calL| + |P|. \end{displaymath}
Here, let $\calL$ be the family of all the lines in all the (possibly reduced) collections $\calL_{e,k}$, for $e \in E_{j}'$. By \eqref{form20} and \eqref{form22}, the cardinality of $\calL$ is bounded from below and above as follows:
\begin{displaymath} \frac{n^{1 + s}}{2^{2k} \cdot \log^{4} n} \lesssim \frac{n^{1 + s}}{2^{2k} \cdot (jk)^{2}} \lesssim |\calL| \lesssim \frac{n^{1 + s}}{2^{2k}}. \end{displaymath}
Each line in $\calL$ contains $\sim 2^{k}$ points of $P$, so the number of incidences $I(\calL,P)$ between $\calL$ and $P$ is $\gtrsim n^{1 + s}/(2^{k} \cdot \log^{4} n)$. Plugging this into the Szemer\'edi-Trotter bound yields
\begin{displaymath} \frac{n^{1 + s}}{2^{k} \cdot \log^{4} n} \lesssim \left(\frac{n^{1 + s}}{2^{2k}}\right)^{2/3}\cdot n^{2/3} + \frac{n^{1 + s}}{2^{2k}} + n = \frac{n^{4/3 + 2s/3}}{2^{4k/3}} + \frac{n^{1 + s}}{2^{2k}} + n. \end{displaymath}

There are three possible cases, according to which one of the three terms on the right dominates. The first term cannot do this, because the resulting inequality combined with \eqref{form23} -- for large enough $C$ -- would lead to a contradiction:
\begin{displaymath} Cn^{1 - s} \cdot \log^{12} n \lesssim 2^{k} \lesssim n^{1 - s} \cdot \log^{12} n. \end{displaymath}
The same is true of the second term, for the same reason. So, the third term dominates, and this gives
\begin{displaymath} 2^{k} \gtrsim \frac{n^{s}}{\log^{4} n}. \end{displaymath}
The proof is complete, because the lines on $\calL$ contain $\gtrsim 2^{k}$ points. \end{proof}

\end{document}